\documentclass[11pt,twoside]{amsart}
\usepackage{latexsym}
\usepackage{graphics}
\usepackage[pdftex]{graphicx}
\usepackage{multirow}
\usepackage{caption}
\usepackage{subcaption}
\usepackage{amsmath, amsthm, amscd, amsfonts, amssymb, float, graphicx, color, xcolor, soul}
\input xy
\xyoption{all}
\newtheorem{lemma}{Lemma}
\newtheorem{theorem}[lemma]{Theorem}

\newtheorem{conj}{Conjecture}
\newtheorem*{theorem*}{Theorem}
\newtheorem*{conj*}{Conjecture}

%
%
%
%
\setlength{\oddsidemargin}{0.5cm}
\setlength{\evensidemargin}{0.5cm}
\setlength{\topmargin}{-1.6cm}
\setlength{\leftmargin}{0.5cm}
\setlength{\rightmargin}{0.5cm}
\setlength{\textheight}{24.00cm}
\setlength{\textwidth}{15.00cm}
\begin{document}
%
\title[On harmonic index and diameter of quasi-tree graphs]
{On harmonic index and diameter of quasi-tree graphs}
\author[A.~Abdolghafourian]{A.~Abdolghafourian}
\address{A.~Abdolghafourian, Department of Mathematical Science \newline Yazd University
\\ Yazd, 89195-741, Iran}
\email{abdolghafourian@yazd.ac.ir}
\author[M.~A.~Iranmanesh]{Mohammad~A.~Iranmanesh*}
\thanks{*Corresponding author}\address{M.~A.~Iranmanesh, Department
of Mathematical Science \newline Yazd University\\ Yazd, 89195-741 , Iran}
\email{iranmanesh@yazd.ac.ir}

\begin{abstract}
The harmonic index of a graph $G$ ($H(G)$) is defined as the sum of the weights $\frac{2}{d_u+d_v}$ for all edges $uv$
of $G$, where $d_u$ is the degree of a vertex $u$ in $G$. In this paper, we show that $H(G)\geq D(G)+\frac{5}{3}-\frac{n}{2}$
and $H(G)\geq \left(\frac{1}{2}+\frac{2}{3(n-2)}\right)D(G)$ where $G$ is a quasi-tree graph of
order $n$ and diameter $D(G)$. This is a conjecture, proposed by Jerline and Michaelraj \cite{amalorpava2016harmonic}. Indeed we show
that both lower bounds are tight and identify all quasi-tree graphs reaching these two lower bounds.
\end{abstract}
\maketitle
\section{Introduction}
\label{sec:introd}
Let $G$ be a simple connected graph with vertex set $V(G)$ and  edge set $E(G)$ of order $n$ ($|V(G)|=n$). The harmonic
index of $G$, first appeared in \cite{fajtlowicz1987conjectures}, is defined as $H(G)=\sum_{uv\in E(G)}\frac{2}{d_u+d_v}$,
where for $v\in V(G)$, $d_v$ is the degree of $v$ in $G$. For $u,  v\in V(G)$, the distance between $u$ and $v$ is shown by $d(u,v)$. Also $D(G)=max\{d(u,v)\}_{u,v\in V(G)}$ is the diameter of $G$ and
$\delta(G)=min \{d_v\} _{v\in V(G)}$.

The applications of harmonic index in various chemical disciplines have been demonstrated in  \cite{betancur2015vertex,deng2013harmonic,furtula2013structure,rada2014vertex}.
Also several studies have focused on  graph theoretical properties of the harmonic index.
For a broad overview, we refer to \cite{gutman2019harmonic}.

A connected graph $G$ is a quasi-tree graph if $G$ is not a tree and there exists a
vertex $v\in V(G)$ such that $G-v$ is a tree. A graph $G$ is called unicyclic, if it contains only one cycle.
Obviously, every unicyclic graph is a quasi-tree graph. Many researchers have studied  topological indices of quasi-tree graphs.
See for example \cite{iranmanesh2020estrada,li2009extremal,qiao2010zagreb,qiao2010zeroth,wagner2010maxima,xu2015kirchhoff}.

Liu \cite{liu2013harmonic} found a relation between harmonic index and diameter of a graph.
He proved that if $n\geq 4$ and $G$ is a connected graph of order $n$,
then $ H(G)\leq D(G)+\frac{n}{2}-1$ and $ H(G)\leq \frac{n}{2}D(G)$.
Also a lower bound was found for trees. If $T$ is a tree of order $n\geq 4$,
then $ H(T)\geq D(T)+\frac{5}{6}-\frac{n}{2}$ and $H(T)\geq \left(\frac{1}{2}+\frac{1}{3(n-1)}\right)D(T)$.
Thereby Liu  \cite{liu2013harmonic} proposed the following conjecture:
\begin{conj}\label{conj1}
Let $G$ be a connected graph with order $n\geq  4$, then
$$H(G)\geq D(G)+\frac{5}{6}-\frac{n}{2}\hspace{.6cm} and \hspace{.6cm} H(G)\geq \left(\frac{1}{2}+\frac{1}{3(n-1)}\right)D(G).$$
\end{conj}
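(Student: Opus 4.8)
The plan is to prove both inequalities by induction on the order $n$, with the path $P_n$ playing the role of the extremal graph and Liu's tree bounds \cite{liu2013harmonic} serving as the backbone. First I would record the extremal computation: the two end edges of $P_n$ have weight $\tfrac{2}{3}$ and the $n-3$ internal edges have weight $\tfrac12$, so $H(P_n)=2\cdot\tfrac23+(n-3)\cdot\tfrac12=\tfrac n2-\tfrac16$, while $D(P_n)=n-1$. Both right-hand sides then equal $(n-1)+\tfrac56-\tfrac n2=\big(\tfrac12+\tfrac1{3(n-1)}\big)(n-1)=\tfrac n2-\tfrac16$, so $P_n$ attains equality in each inequality. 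The whole argument is therefore aimed at showing that any deviation from the path structure produces slack.

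A clean warm-up isolates the rigid boundary case. Since always $D\le n-1$, suppose $D=n-1$: then a diametral geodesic $v_0v_1\cdots v_{n-1}$ uses all $n$ vertices, and any chord $v_iv_j$ with $j>i+1$ would give $d(v_i,v_j)=1<j-i$ and shortcut the geodesic, a contradiction; hence $G=P_n$ and both bounds hold with equality. So I may assume $D\le n-2$, which is where slack should live. I then set up the induction with potentials $\Phi_1(G)=H(G)-D(G)+\tfrac n2-\tfrac56$ and $\Phi_2(G)=H(G)-\big(\tfrac12+\tfrac1{3(n-1)}\big)D(G)$, the goal being $\Phi_i(G)\ge0$, with small $n$ (in particular $n=4$) checked by inspection as the base.

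The reduction step deletes a diametral endpoint that happens to be a leaf. If $v_0$ is a pendant diametral endpoint with neighbour $v_1$ of degree $d$, the exact change is
$$H(G)-H(G-v_0)=\frac{2}{1+d}-\sum_{x\sim v_1,\,x\neq v_0}\left(\frac{2}{d-1+d_x}-\frac{2}{d+d_x}\right),$$
which is always positive; moreover a degree-$1$ vertex is never internal to a shortest path, so all distances among the remaining vertices are unchanged and $D(G-v_0)\in\{D-1,D\}$. When $D(G-v_0)=D$ (a second diametral pair survives) the inductive inequality $\Phi_i(G)\ge\Phi_i(G-v_0)\ge0$ closes at once, since the diameter does not drop. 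The delicate case is $D(G-v_0)=D-1$, where for the first inequality one needs $H(G)-H(G-v_0)\ge\tfrac12$: this holds comfortably when $d=2$ (it is exactly $\tfrac12$ along the path, recovering equality), but the displayed formula shows it can fail when $d\ge3$. The ratio inequality gets a parallel treatment, with the change of constant from $\tfrac1{3(n-1)}$ to $\tfrac1{3(n-2)}$ replacing the additive $\tfrac12$.

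The main obstacle is exactly this bad configuration together with a more fundamental difficulty: $H$ is not monotone under edge addition, since inserting a chord raises two degrees and lowers the weights of all incident edges, so $H$ may move either way. This blocks the tempting ``delete an edge of a cycle and induct'' reduction and forces a different device when no favourable leaf deletion is available (both diametral endpoints of high degree, or $v_1$ of degree $\ge3$ with no surviving diametral pair). To push through, I would (i) replace single-leaf deletion by removal of an entire pendant branch at $v_1$, or select the reduction vertex through a discharging argument that charges the surplus created at high-degree vertices against the unit drop in diameter; and (ii) for the ratio inequality, split on whether $D$ is large (close to $n-1$, where the geodesic structure is nearly rigid as in the warm-up) or small (where $\big(\tfrac12+\tfrac1{3(n-1)}\big)D$ is small enough that crude degree-based lower bounds on $H$, such as $\tfrac{2}{d_u+d_v}\ge\tfrac1{\max(d_u,d_v)}$, already suffice). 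Guaranteeing that at least one of these auxiliary reductions always applies, and discharging the resulting finite list of local inequalities, is where the real work lies.
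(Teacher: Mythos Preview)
The paper does not contain a proof of this statement at all: Conjecture~\ref{conj1} is quoted from Liu \cite{liu2013harmonic} as an \emph{open} conjecture and is never addressed for general connected graphs. What the paper actually proves are Theorems~\ref{th1} and~\ref{th2}, which establish the \emph{sharper} bounds of Conjecture~\ref{conj} (with constants $\tfrac{5}{3}$ and $\tfrac{2}{3(n-2)}$) in the restricted class of quasi-tree graphs, by an induction that always deletes a vertex \emph{off} the diametral path and falls back on the already-known unicyclic case of Jerline--Michaelraj. There is therefore nothing in the paper to compare your proposal against.

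As to the proposal itself: the identification of $P_n$ as the extremal graph and the warm-up for $D=n-1$ are fine, and the displayed formula for $H(G)-H(G-v_0)$ is correct. But the scheme does not close. You yourself note that when the diameter drops by one and $d(v_1)\ge 3$, the needed gain $H(G)-H(G-v_0)\ge\tfrac12$ can fail, and that the non-monotonicity of $H$ under edge addition blocks the ``delete a chord and induct'' escape. The suggested fixes---deleting a whole pendant branch, discharging, or splitting on the size of $D$---are only gestures: no concrete discharging rules are given, no inequality is stated that would handle both diametral endpoints having degree $\ge 2$, and the ``crude degree-based'' bound $\tfrac{2}{d_u+d_v}\ge \tfrac{1}{\max(d_u,d_v)}$ is far too weak to control $H(G)$ from below on dense graphs with small diameter. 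In short, the proposal correctly locates the obstacles but does not overcome them; as far as this paper is concerned, Conjecture~\ref{conj1} for arbitrary connected graphs remains open.
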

Jerline and Michaelraj  \cite{amalorpava2016harmonic,jerline2016conjecture} found a sharper bound for unicyclic graphs.
They showed that if $G$ is a unicyclic graph of order $n$, then $ H(G)\geq D(G)+\frac{5}{3}-\frac{n}{2}$
and $ H(G)\geq \left(\frac{1}{2}+\frac{2}{3(n-2)}\right)D(G)$. They introduced a family of graphs,
$U_{n,4}^{1,n-5}$, which is a set of graphs obtained from $C_4$ by attaching one pendant vertex and a path of length $n-5$ to two
diametrically nonadjacent vertices of $C_4$ (see Figure \ref{fig:gama}). Then, they proposed the following conjecture  \cite{amalorpava2016harmonic}:
\begin{conj}\label{conj}
Let $G$ be a simple connected graph, that is not a tree, of order
$n\geq 7$. Then,
$$H(G)\geq D(G)+\frac{5}{3}-\frac{n}{2}\hspace{.6cm} and \hspace{.6cm}H(G)\geq \left(\frac{1}{2}+\frac{2}{3(n-2)}\right)D(G),$$
where equality holds if and only if $G = U_{n,4}^{1,n-5}$.
\end{conj}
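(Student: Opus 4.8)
The plan is to establish both inequalities at once by locating the extremal graph. Fix $n \geq 7$ and set
$$\Phi(G) = H(G) + \tfrac{n}{2} - D(G), \qquad \Psi(G) = \frac{H(G)}{D(G)},$$
so that the first asserted inequality reads $\Phi(G) \geq \tfrac53$ and the second reads $\Psi(G) \geq \tfrac12 + \tfrac{2}{3(n-2)}$. Since for each $n$ there are only finitely many connected non-tree graphs of order $n$, each functional attains its minimum, and it suffices to show that in both cases the minimum is attained exactly at $U_{n,4}^{1,n-5}$. A direct computation there gives $H = \tfrac n2 - \tfrac13$ and $D = n-2$, hence $\Phi = \tfrac53$ and $\Psi = \tfrac{3n-2}{6(n-2)} = \tfrac12 + \tfrac{2}{3(n-2)}$, so both bounds are tight simultaneously. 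I would run an induction on the cyclomatic number $\mu(G) = |E(G)| - n + 1 \geq 1$; the base case $\mu = 1$ (unicyclic graphs) is precisely the bound of Jerline and Michaelraj quoted above, and the accompanying equality analysis identifying $U_{n,4}^{1,n-5}$ as the unique unicyclic extremal graph I would extract from their proof or re-derive.

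For the inductive step I would take $G$ with $\mu(G) \geq 2$ and delete a single edge $e = xy$ lying on a cycle, so that $G - e$ is connected of order $n$ with $\mu(G-e) = \mu(G)-1 \geq 1$; in particular $G-e$ is still a non-tree graph to which the inductive hypothesis applies. Deleting an edge never decreases a distance, so $D(G-e) \geq D(G)$, and the effect on the harmonic index is
$$H(G) - H(G-e) = \frac{2}{d_x+d_y} - \sum_{z \in N(x)\setminus\{y\}}\left(\frac{2}{d_x-1+d_z} - \frac{2}{d_x+d_z}\right) - \sum_{w \in N(y)\setminus\{x\}}\left(\frac{2}{d_y-1+d_w} - \frac{2}{d_y+d_w}\right).$$
The plan is to choose the cycle edge $e$ so that this difference is nonnegative; then $D(G-e)\geq D(G)$ gives $\Phi(G)\geq\Phi(G-e)$ and $\Psi(G)\geq\Psi(G-e)$, and the inductive hypothesis closes the step. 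A short calculation shows the displayed difference is strictly positive when $x,y$ and their neighbours have small degree — for instance it equals $\tfrac16$ when $d_x = d_y = d_z = d_w = 2$ — so the favourable ``thin'' case also forces $\Phi(G) > \tfrac53$ strictly, which is what the equality characterization requires.

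The main obstacle is that the displayed difference can be \emph{negative} when $x$ and $y$ both have large degree: for the book graph in which $x,y$ are adjacent and share $k$ common neighbours, deleting $xy$ actually increases $H$ for large $k$, so plain edge deletion is not monotone in general. I would resolve this by a dichotomy. In the ``dense'' regime, where every cycle edge has a high-degree endpoint — equivalently where $\Delta(G)$ exceeds a threshold depending on $n$ — a high-degree vertex forces the diameter $D$ to be small, so small that $\Phi \geq \tfrac n2 - D$ already exceeds $\tfrac53$, and a matching lower bound on $H$ valid in this regime forces $\Psi$ above $\tfrac12 + \tfrac{2}{3(n-2)}$; such graphs cannot be minimizers. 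In the complementary ``thin'' regime the degrees along cycles are small, the displayed difference is nonnegative, and the edge-deletion reduction applies and drives $\mu$ down to $1$. Combining the strict gain in the thin regime, the crude bound in the dense regime, and the unicyclic equality case pins the minimizer uniquely to $U_{n,4}^{1,n-5}$; the hypothesis $n \geq 7$ enters to guarantee that the long path of $U_{n,4}^{1,n-5}$ is nonempty and that no sporadic small graph ties the bound. Quantifying the degree threshold in the dense regime, and verifying that a diameter-preserving cycle edge always exists in the thin regime, are the two technical points I expect to require the most care.
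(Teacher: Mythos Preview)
First, a scope mismatch: the paper does \emph{not} prove Conjecture~\ref{conj} for arbitrary connected non-tree graphs. It establishes the two inequalities only for quasi-tree graphs (Theorems~\ref{th1} and~\ref{th2}), by induction on $n$ and deletion of a well-chosen vertex of degree $1$ or $2$ lying off the diametrical path; the quasi-tree hypothesis is what keeps the residual case analysis finite. Your proposal, by contrast, attacks the full conjecture via induction on the cyclomatic number $\mu$ and edge deletion. So even if your argument were complete it would be a genuinely different (and stronger) result, obtained by a different mechanism: you reduce $\mu$ at fixed $n$, the paper reduces $n$ within the quasi-tree class.

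That said, your proposal has a real gap in the ``dense regime''. You write that if every cycle edge has a high-degree endpoint---``equivalently $\Delta(G)$ exceeds a threshold''---then a high-degree vertex forces $D$ small, so small that $\tfrac{n}{2}-D$ alone exceeds $\tfrac{5}{3}$. Neither assertion holds. The two conditions are not equivalent: a graph can have large $\Delta$ yet still possess a cycle edge with both endpoints of degree $2$. More seriously, large $\Delta$ does not bound $D$: attach a clique $K_k$ to one end of a path of length $n-k$, sharing one vertex. Every cycle edge lies in the clique and has both endpoints of degree at least $k-1$, so you are squarely in your dense regime; yet $D=n-k+1$, and $\tfrac{n}{2}-D=k-\tfrac{n}{2}-1$ is negative whenever $k<\tfrac{n}{2}$. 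You also have not supplied the promised ``matching lower bound on $H$'' that would salvage $\Psi$ in this regime. On the thin side, you have not shown that some cycle edge with $H(G)\ge H(G-e)$ must exist; the single computation with all degrees equal to $2$ is a favourable instance, not a proof that bounded degrees along cycles suffice. As it stands the dichotomy is neither precisely defined nor shown to be exhaustive, so the inductive step does not close. The paper avoids all of this by restricting to quasi-trees and deleting vertices rather than edges, which keeps the change in $H$ under explicit control (Cases~1--3 of Theorem~\ref{th1}).
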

%

They also show  that the inequalities of the above conjecture are not true for a graph of order $6$, namely
$U_{6,4}^{1,1}$.

\begin{figure}[ht]
\includegraphics[scale=.7]{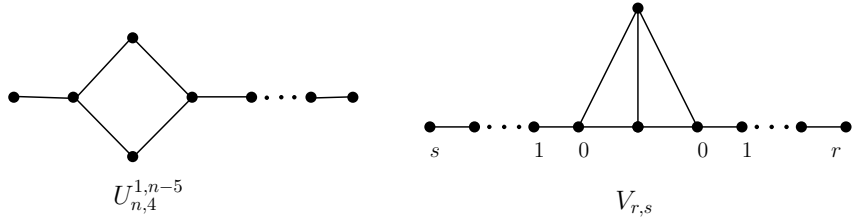}
\caption{The graphs $U_{n,4}^{1,n-5}$ and $V_{r,s}$. }\label{fig:gama}
\end{figure}


Suppose $K_4^{-}$ is a graph of order $4$ which is obtained from $K_4$ by deleting an edge.
Also for $r,s\geq 0$, let $V_{r,s}$ be a family of graphs obtained from $K_4^{-}$ by attaching two paths of lengths $r$ and $s$ to two
nonadjacent vertices of $K_4^{-}$ (see Figure \ref{fig:gama}).
We will show that the inequality holds for all quasi-tree graphs except $U_{6,4}^{1,1}$ and the graph $U_{5,3}^{1,1}$ which is obtained by attaching two pendant vertices to two vertices of $K_3$. Also the equality holds for $V_{1,1}$.

Two main theorems of this paper are as follows.
\begin{theorem*}
Let $G\neq U_{6,4}^{1,1},~U_{5,3}^{1,1}$ be a quasi-tree graph of order $n\geq 3$. Then,
$H(G)\geq D(G)+\frac{5}{3}-\frac{n}{2}$. The equality holds if and only if $G=V_{1,1}$ or $U_{n,4}^{1,n-5}$.
\end{theorem*}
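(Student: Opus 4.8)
Set $f(G):=H(G)+\tfrac n2-D(G)$; the claim is that $f(G)\ge\tfrac53$, with equality exactly for $V_{1,1}$ and for the members of the family $U^{1,n-5}_{n,4}$. Fix a vertex $v$ with $T:=G-v$ a tree; then $|E(G)|=n-2+d_v$, every cycle of $G$ passes through $v$, and $G$ has exactly $d_v-1\ge 1$ independent cycles. Writing $\tfrac n2=\tfrac{|E(G)|}{2}+1-\tfrac{d_v}{2}$, the inequality to be proved becomes
\[
\sum_{xy\in E(G)}\left(\frac{2}{d_x+d_y}+\frac12\right)\;\ge\;D(G)+\frac23+\frac{d_v}{2}.
\]
This is the convenient form: the $d_v$ edges at $v$ contribute exactly $\tfrac{d_v}{2}+\sum_{x\sim v}\tfrac{2}{d_v+d_x}$, which cancels the awkward $\tfrac{d_v}{2}$, while the remaining $n-2$ edges are precisely the edges of the tree $T$, each of whose $G$-degree equals its $T$-degree or is one larger. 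On this tree part a path edge (both ends of $G$-degree $2$) contributes $1$, a leaf edge at a degree-$2$ vertex contributes $\tfrac76$, and each branch vertex or extra pendant subtree contributes a strictly positive surplus.

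Before the main estimate I would record one invariance: if a pendant path of length $\ge 3$ of $G$ lies on a diametral path, deleting its leaf changes $(n,D,H)$ by $(-1,-1,-\tfrac12)$ and hence leaves $f$ unchanged (the analogous move on a length-$2$ pendant path at a degree-$\ge 3$ vertex strictly \emph{decreases} $f$, which is exactly how the small exceptions appear). Consequently it is enough to bound $f$ on quasi-trees whose pendant trees have minimum size, after which the family $U^{1,n-5}_{n,4}$ reappears as the ``grown'' form of its $C_4$-with-one-pendant core -- this is why an infinite family is extremal. After this reduction the problem is finite for each fixed $d_v$: one must locate the surplus $\tfrac23$ beyond $D(G)$ (it sits in the $v$-edges, the two end-edges of the diametral path, and the first branch points) and show the inequality is strict unless the configuration is forced.

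It is then natural to split on $d_v$. For $d_v\ge 4$ the edges at $v$ together with the slack already present on the tree edges overshoot $\tfrac23+\tfrac{d_v}{2}$ by a genuine margin, so no new extremal graphs occur. For $d_v=2$ the graph is unicyclic: here I would reprove the Jerline--Michaelraj estimate but extend its validity from $n\ge 7$ down to $n\ge 3$, which is exactly the regime in which the two small exceptions $U^{1,1}_{6,4}$ and $U^{1,1}_{5,3}$ (both with $f<\tfrac53$) surface and must be set aside, and in which one checks that $U^{1,n-5}_{n,4}$ is the only unicyclic graph with $f=\tfrac53$. The remaining case $d_v=3$ (two independent cycles through $v$) is the crux: one must show $f(G)>\tfrac53$ for every such $G$ other than $V_{1,1}$, i.e. that minimising $f$ forces the cyclic part to be $K_4^{-}$, forces the two nonadjacent vertices of $K_4^{-}$ each to carry a single pendant edge (a longer pendant path, an extra branch, or a different pair of cycle lengths meeting at $v$ each strictly raises $f$), and forces $n=6$.

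The main obstacle is precisely this $d_v=3$ analysis together with the small-order base cases. After the path-shortening reduction one is left with finitely many near-extremal shapes to eliminate -- cycles of lengths other than the prescribed ones meeting at $v$, an additional leaf, a pendant path of length $2$ in place of a pendant edge, a pendant subtree that is not a path, and so on -- and for each of these one has to verify that the edges of $T$ already supply at least the required amount $D(G)+\tfrac23-\sum_{x\sim v}\tfrac{2}{d_v+d_x}$, with strict inequality off the extremal list. It is the invariance of $f$ under path-shortening that makes this enumeration finite, and it is the same bookkeeping, run at small $n$, that produces the exact list of exceptional and extremal graphs.
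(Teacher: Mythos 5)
Your outline follows the same broad strategy as the paper (reduce by deleting low-degree vertices, quote the Jerline--Michaelraj unicyclic bound for the $d_v=2$ case, finish the $d_v=3$ family $V_{r,s}$ by direct computation), but as written it has a genuine gap at its central step. You claim that the invariance of $f(G)=H(G)+\tfrac n2-D(G)$ under shortening pendant paths makes the remaining analysis (for $d_v=3$, and implicitly for $d_v\ge 4$) a finite enumeration. It does not: shortening pendant paths does nothing to the cycles through $v$. Already for $d_v=3$ the graph is bicyclic and the cycles can be arbitrarily long, so after your reduction there remain infinitely many configurations --- for instance theta-type graphs whose internal degree-$2$ vertices lie on no pendant path and need not lie on the chosen diametral path. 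Eliminating these requires a separate induction step that deletes a degree-$2$ vertex $t$ off the diametral path with $N(t)=\{r,s\}$, together with an estimate showing the harmonic index does not drop by more than $\tfrac12$ under this deletion; that is exactly the paper's Case~2, whose adjacent subcase rests on the positivity of $\frac{x+4}{x(x+1)(x+2)}+\frac{y+4}{y(y+1)(y+2)}-\frac{2}{(x+y)(x+y-2)}$ for $x,y\ge 2$ (Lemma~\ref{case2.2}), and nothing in your sketch substitutes for it. A related omission: in any such deletion the smaller graph may be a tree (so $G$ is in fact unicyclic) or may be one of the exceptional graphs $U^{1,1}_{6,4}$, $U^{1,1}_{5,3}$, for which the induction hypothesis is unavailable and the finitely many resulting graphs must be checked by hand; your plan never confronts this interaction, which the paper handles explicitly in Cases~1, 2.1 and 2.2.

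The other key steps are asserted rather than proved: the ``genuine margin'' for $d_v\ge 4$, the claim that minimizing $f$ when $d_v=3$ forces the cyclic part to be $K_4^{-}$ with exactly one pendant edge at each degree-$2$ vertex, and the validity of the unicyclic bound with its equality case in the small-$n$ range are precisely the substance of the theorem, not bookkeeping to be deferred. Note also that your leaf-deletion ``invariance'' needs the diameter to drop by exactly one; if some diametral path avoids the deleted leaf then $D$ is unchanged and $f$ strictly decreases --- harmless for the inequality, but it must be tracked to obtain the equality characterization ($G=V_{1,1}$ or $U^{1,n-5}_{n,4}$ only), which your argument never completes. So the proposal is a plausible plan whose completion would essentially reproduce the paper's induction with its Case~1/2/3 split, its base cases $3\le n\le 6$, and Lemma~\ref{case2.2}; in its present form it does not constitute a proof.
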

\begin{theorem*}
Let $G$ be a quasi-tree graph of order $n\geq 3$  and $G\neq U_{6,4}^{1,1},~U_{5,3}^{1,1}$. Then,
$H(G)\geq \left(\frac{1}{2}+\frac{2}{3(n-2)}\right)D(G)$. The equality holds if and only if $G=V_{1,1}$ or $U_{n,4}^{1,n-5}$.
\end{theorem*}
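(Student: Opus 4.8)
The plan is to deduce the multiplicative bound from the additive one in the theorem stated just above, which I take as known (its hypotheses — including the exclusion of $U_{6,4}^{1,1}$ and $U_{5,3}^{1,1}$ — coincide with the present ones). The first ingredient is the elementary bound $D(G)\le n-2$ for every quasi-tree graph $G$ that is not a tree: if $d(x,y)=n-1$ for some $x,y$, a shortest $x$-$y$ path would be a Hamiltonian path of $G$ and, being isometric, an induced subgraph, whence $G$ would have only $n-1$ edges and be a tree. Since $D(G)\le n-2$ we have $\frac{2D(G)}{3(n-2)}\le\frac23$, with equality iff $D(G)=n-2$; hence it suffices to prove the cleaner statement that $H(G)\ge\frac{D(G)}{2}+\frac23$, with equality precisely for $G=V_{1,1}$ and $G=U_{n,4}^{1,n-5}$ (both of which one checks to have diameter $n-2$). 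Indeed the multiplicative inequality and its equality case follow from this at once.

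To prove the cleaner statement I split on the size of $D:=D(G)$. If $D=n-2$, the additive theorem gives $H(G)\ge D+\frac53-\frac n2 = D+\frac53-\frac{D+2}{2}=\frac D2+\frac23$, and equality holds here iff it holds in the additive theorem, i.e. iff $G=V_{1,1}$ or $G=U_{n,4}^{1,n-5}$ — exactly as required. The remaining case $D\le n-3$ is the crux, and the additive theorem does not suffice for it: it only yields $H(G)\ge D+\frac53-\frac n2$, which is below $\frac D2+\frac23$ by $\frac{n-D-2}{2}\ge\frac12$. The missing amount has to come from the at least two vertices of $G$ that do not lie on a chosen diametral path $P\colon v_0v_1\cdots v_D$. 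I would fix $P$, bound $H(G)$ below by the weights of the edges of $P$ together with the weights of the edges meeting the off-path vertices, and argue that this off-path contribution always exceeds the deficit; the extremal configurations for such local weight estimates (a pendant vertex or pendant subtree hanging off $P$, or the extra edges at the special vertex $v$ with $G-v$ a tree) are precisely those forcing $D=n-2$, and so do not occur here, giving a strict inequality. A cleaner packaging of the same idea is induction on $n$: delete a suitably chosen pendant vertex — or the far endpoint of a pendant path — to obtain a quasi-tree of order $n-1$, and control the change in $H$ against the change in $D$.

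The main obstacle is exactly this last case, for two reasons. First, $H$ is not monotone under vertex deletion: removing a leaf $u$ with neighbour $w$ deletes the edge $uw$ but raises the weight of every other edge at $w$, so an inductive step requires a genuine sign analysis of $H(G)-H(G-u)$, not a one-line estimate. Second, one must be sure that no extremal graph slips through with $D\le n-3$ and that $V_{1,1}$ and $U_{n,4}^{1,n-5}$ (each with $D=n-2$) are the only equality cases. For small orders (say $n\le 6$) the cleanest route is to enumerate the finitely many quasi-tree graphs and verify the inequality directly; for $n\ge 7$ the structural or inductive argument above should apply uniformly.
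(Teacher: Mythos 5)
Your reduction has some sound pieces: the bound $D(G)\le n-2$ for a quasi-tree that is not a tree is correct, the observation that $\left(\frac12+\frac{2}{3(n-2)}\right)D(G)\le \frac{D(G)}{2}+\frac23$ with equality iff $D(G)=n-2$ is correct, and in the case $D(G)=n-2$ the additive theorem does deliver the multiplicative bound together with the equality characterization (both $V_{1,1}$ and $U_{n,4}^{1,n-5}$ indeed have diameter $n-2$). But the case $D(G)\le n-3$ is a genuine gap, not a detail. There you propose to prove the strict inequality $H(G)>\frac{D(G)}{2}+\frac23$, which for $D\le n-3$ is \emph{strictly stronger} than the theorem you are asked to prove (the theorem only requires $H(G)\ge \frac{D}{2}+\frac{2D}{3(n-2)}$, and for small $D$ relative to $n$ the additive theorem gives nothing, as you note). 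You do not prove this stronger claim: the passage ``I would fix $P$, bound $H(G)$ below by the weights of the edges of $P$ together with the weights of the edges meeting the off-path vertices, and argue that this off-path contribution always exceeds the deficit'' is a plan, not an argument, and you yourself flag that $H$ is not monotone under vertex deletion and that a ``genuine sign analysis'' is required. That sign analysis is precisely the content of the paper's proof: an induction on $n$ run directly for the multiplicative bound, with the same case split as in the additive theorem (a pendant vertex off the diametral path; a degree-$2$ vertex $t$ off the path with its neighbours $r,s$ nonadjacent or adjacent, controlled by the inequalities of Lemmas \ref{case2.2} and \ref{casen6}; the unicyclic subcase delegated to the known unicyclic result; and the residual Case 3 where $G-w$ is a path and $d_w=3$, settled by the finite list in Table \ref{tabfig1}), together with the separate treatment of the exceptional situation $G-t=U_{6,4}^{1,1}$. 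None of this appears in your proposal, and without it neither the inequality for $D\le n-3$ nor the claim that no new equality case arises there is established.

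A further, smaller omission: even in your inductive ``cleaner packaging'', deleting a pendant vertex or the endpoint of a pendant path can produce the excluded graph $U_{6,4}^{1,1}$ (or leave a tree, in which case the induction hypothesis does not apply and one must fall back on the unicyclic result); the paper checks these boundary situations one by one (Figures \ref{figt1}, \ref{figt21}, \ref{figt22} and Subcase 2.3), and your sketch does not account for them. So the proposal is not a proof of the stated theorem; its only complete portion (the case $D=n-2$ plus the equality transfer) covers exactly the part where the hard work has already been done by the additive theorem.
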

In Section \ref{sec:preli}, we prove the lemmas that will be used in Section \ref{sec:main},  where we prove the main theorems.

All graphs considered in this paper are finite, undirected, connected and simple. Let $G$ be a graph and $v\in V(G)$ and $P$ a path of $G$, then by $G-v$ and $G-P$ we mean the graph obtained from $G$ by deleting the vertex $v$ and the vertices of $P$, respectively. For all other notation and
definitions not given here, the readers are referred to \cite{west2001introduction}.
\section{Preliminaries}
\label{sec:preli}
\begin{lemma}\label{case2.2}
For $x,y\geq 2$, the two-variables function
$$f(x,y)=\frac{x+4}{x(x+1)(2+x)}+\frac{y+4}{y(y+1)(2+y)}-\frac{2}{(x+y)(x+y-2)}$$
is positive.
\end{lemma}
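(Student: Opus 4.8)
The plan is to decouple the two variables by bounding each term $\frac{t+4}{t(t+1)(t+2)}$ from below by the simpler expression $\frac{1}{t^2}$, which is valid precisely on the range $t\geq 2$, and then to finish with two elementary estimates.

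First I would establish the pointwise inequality
$$\frac{t+4}{t(t+1)(t+2)}\geq \frac{1}{t^2}\qquad (t\geq 2),$$
which after clearing denominators reduces to $t(t+4)\geq (t+1)(t+2)$, i.e. to $t\geq 2$. Applying this with $t=x$ and with $t=y$ gives
$$f(x,y)\geq \frac{1}{x^2}+\frac{1}{y^2}-\frac{2}{(x+y)(x+y-2)},$$
so it suffices to prove that this last expression is positive for $x,y\geq 2$.

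Next, writing $s=x+y\geq 4$, I would split the right-hand side into two nonnegative pieces using the identity
$$\frac{1}{x^2}+\frac{1}{y^2}-\frac{8}{s^2}=\frac{(x-y)^2\bigl(s^2+2xy\bigr)}{x^2y^2s^2}\geq 0,$$
obtained by putting everything over the common denominator $x^2y^2s^2$ and factoring the numerator as $(x^2+y^2)s^2-8x^2y^2=(s^2-4xy)(s^2+2xy)$ with $s^2-4xy=(x-y)^2$. This yields $\frac{1}{x^2}+\frac{1}{y^2}\geq \frac{8}{s^2}$, and it remains to observe that
$$\frac{8}{s^2}-\frac{2}{s(s-2)}=\frac{8(s-2)-2s}{s^2(s-2)}=\frac{6s-16}{s^2(s-2)}>0\qquad(s\geq 4),$$
since then $6s-16\geq 8>0$ and $s-2>0$. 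Combining the two pieces gives $f(x,y)>0$.

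There is essentially no obstacle here: the only step that requires a small idea is choosing a separable lower bound for the cubic-denominator term, and $1/t^2$ works because it matches the leading asymptotic behaviour while the inequality $\frac{t+4}{t(t+1)(t+2)}\geq \frac{1}{t^2}$ degrades to exactly the hypothesis $t\geq 2$; the rest is routine algebra. If one prefers to avoid guessing the factorization in the middle step, the bound $\frac{1}{x^2}+\frac{1}{y^2}\geq \frac{8}{s^2}$ also follows at once from the convexity of $t\mapsto 1/t^2$ via Jensen's inequality.
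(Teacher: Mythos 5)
Your proof is correct: the pointwise bound $\frac{t+4}{t(t+1)(t+2)}\geq\frac{1}{t^2}$ does reduce to $t\geq 2$ after clearing denominators, the factorization $(x^2+y^2)s^2-8x^2y^2=(s^2-4xy)(s^2+2xy)$ with $s^2-4xy=(x-y)^2$ checks out, and the final one-variable estimate $\frac{8}{s^2}-\frac{2}{s(s-2)}=\frac{6s-16}{s^2(s-2)}>0$ for $s\geq 4$ supplies the strictness needed in the conclusion. However, your route is genuinely different from the paper's. The paper never decouples and re-couples through $s=x+y$; instead it writes $\frac{t+4}{t(t+1)(t+2)}$ in partial fractions as $\frac{2}{t(t+1)}-\frac{1}{(t+1)(t+2)}$, splits $\frac{2}{(x+y)(x+y-2)}$ into two equal halves, and pairs one copy of $\frac{1}{x(x+1)}$ (resp.\ $\frac{1}{y(y+1)}$) against each half, so that $f$ becomes a sum of four brackets, each visibly positive because $(x+y)(x+y-2)\geq x(x+2)>x(x+1)$ when $y\geq 2$. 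The paper's argument is shorter and purely term-by-term, with no factorization or convexity input; yours costs a little more algebra in the middle step (or an appeal to Jensen) but has the conceptual advantage of isolating where the hypothesis $t\geq 2$ enters (your separable bound is sharp exactly there) and of reducing the cross term to a clean single-variable comparison in $s$. Both are complete proofs of the lemma.
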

\begin{proof}
\begin{align*}
f(x,y)&=\frac{x+4}{x(x+1)(2+x)}+\frac{y+4}{y(y+1)(2+y)}-\frac{2}{(x+y)(x+y-2)}\\
&=\frac{2}{x(x+1)}+\frac{-1}{(x+2)(x+1)}+\frac{2}{y(y+1)}+\frac{-1}{(y+2)(y+1)}-\frac{2}{(x+y)(x+y-2)}\\
&=(\frac{1}{x(x+1)}-\frac{1}{(x+2)(x+1)})+(\frac{1}{y(y+1)}-\frac{1}{(y+2)(y+1)})\\
&+(\frac{1}{x(x+1)}-\frac{1}{(x+y)(x+y-2)})+(\frac{1}{y(y+1)}-\frac{1}{(x+y)(x+y-2)}).
\end{align*}
Given $x,y\geq 2$, the terms inside parentheses are positive.
\end{proof}

\begin{lemma}\label{casen6}
$\frac{x}{x+2}\geq\frac{1}{5+x}+\frac{x-1}{2+x}\geq \frac{11}{28}$ for every $x\geq 2$.
\end{lemma}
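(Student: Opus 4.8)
The plan is to treat the two inequalities in the chain separately; both reduce to elementary estimates with no case analysis.

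For the left inequality $\frac{x}{x+2}\ge \frac{1}{x+5}+\frac{x-1}{x+2}$, I would simply move $\frac{x-1}{x+2}$ across: since $\frac{x}{x+2}-\frac{x-1}{x+2}=\frac{1}{x+2}$, the claim becomes $\frac{1}{x+2}\ge\frac{1}{x+5}$, which is immediate from $x+2<x+5$ (and in fact holds strictly for every $x>-2$, in particular for all $x\ge 2$).

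For the right inequality $\frac{1}{x+5}+\frac{x-1}{x+2}\ge\frac{11}{28}$, I would put everything over the common denominator $28(x+5)(x+2)$ and compute the numerator as a quadratic in $x$: a short expansion gives $28(x+2)+28(x-1)(x+5)-11(x+5)(x+2)=17x^2+63x-194$, which vanishes at $x=2$ and hence factors as $(x-2)(17x+97)$. Since $x-2\ge 0$, $17x+97>0$, and the denominator $28(x+5)(x+2)>0$ for $x\ge 2$, the difference is nonnegative, with equality exactly at $x=2$ (matching the boundary value $\frac{1}{7}+\frac{1}{4}=\frac{11}{28}$). Equivalently, writing $\frac{1}{x+5}+\frac{x-1}{x+2}=1+\frac{1}{x+5}-\frac{3}{x+2}$, one can observe that its derivative $\frac{3}{(x+2)^2}-\frac{1}{(x+5)^2}$ is positive because $3(x+5)^2>(x+2)^2$, so the function is increasing on $[2,\infty)$ and therefore bounded below by its value $\frac{11}{28}$ at $x=2$.

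There is no real obstacle here: the only point requiring care is the arithmetic of expanding and factoring the quadratic numerator (and noting that the unique equality case $x=2$ is consistent with where this lemma will later be applied).
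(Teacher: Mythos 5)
Your proof is correct, and the arithmetic checks out: the numerator $28(x+2)+28(x-1)(x+5)-11(x+5)(x+2)$ does equal $17x^2+63x-194=(x-2)(17x+97)$, which is nonnegative for $x\ge 2$ over the positive denominator. The left inequality is handled exactly as in the paper (rewrite $\frac{x}{x+2}$ as $\frac{1}{x+2}+\frac{x-1}{x+2}$ and compare $\frac{1}{x+2}\ge\frac{1}{x+5}$). For the right inequality the paper uses precisely your ``equivalently'' remark: it sets $f(x)=\frac{1}{5+x}+\frac{x-1}{2+x}$, notes $f'(x)=\frac{-1}{(5+x)^2}+\frac{3}{(2+x)^2}>0$, and concludes $f(x)\ge f(2)=\frac{11}{28}$. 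Your primary route via clearing denominators and factoring the quadratic is an elementary algebraic alternative that avoids calculus and, as a bonus, exhibits $x=2$ as the unique equality case; the monotonicity route is shorter to state but gives the same information. Either version suffices for the way the lemma is used later (with $x=d_w\ge 2$).
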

\begin{proof}
The first inequality is valid since
$\frac{x}{x+2}=\frac{1}{x+2}+\frac{x-1}{x+2}\geq\frac{1}{5+x}+\frac{x-1}{2+x}$.
\\
Let $f(x)=\frac{1}{5+x}+\frac{x-1}{2+x}$. Then, $f'(x)=\frac{-1}{(5+x)^2}+\frac{3}{(2+x)^2}>0$.
So, $f$ is an increasing function and $f(x)\geq f(2)=\frac{11}{28}$ for every $x\geq 2$.
\end{proof}
\section{Proof of the main theorems}\label{sec:main}
In this section, we will show that the Conjecture \ref{conj} is true for all quasi-tree graphs.
Also in our proof, it will be shown that the equality in both inequalities hold whenever $G$ is the graph $V_{1,1}$.

\begin{figure}[ht]
\includegraphics[scale=.85]{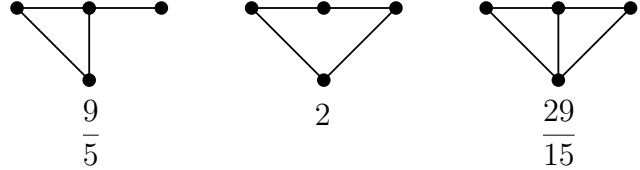}
\caption{Quasi-tree graphs of order $4$ and their harmonic indices.}\label{fign4}
\end{figure}

\begin{figure}[ht]
\includegraphics[scale=.85]{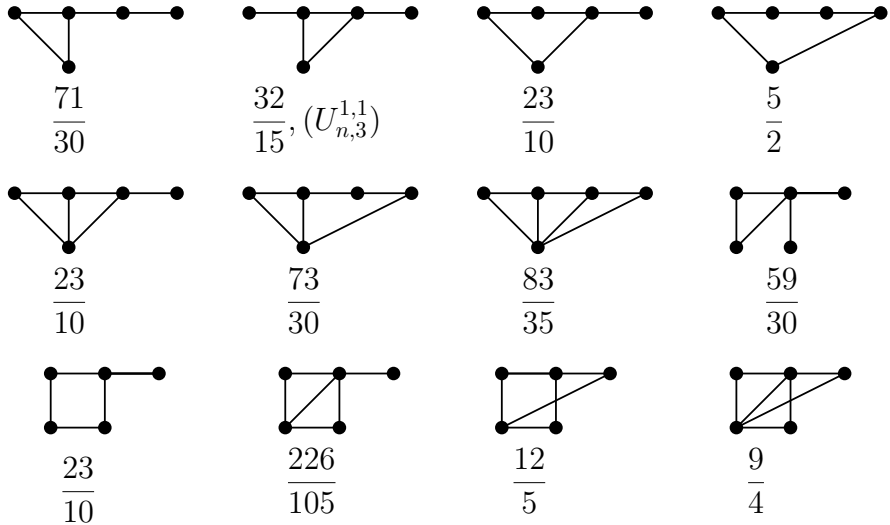}
\caption{Quasi-tree graphs of order $5$ and their harmonic indices.}\label{fign5}
\end{figure}

\begin{lemma}\label{th0}
Let $G$ be a quasi-tree graph of order $n$, where $3\leq n\leq 6$, such that $G\neq U_{6,4}^{1,1},~U_{5,3}^{1,1}$. Then,
$$(1)~H(G)\geq D(G)+\frac{5}{3}-\frac{n}{2}\hspace{.7cm}and\hspace{.7cm}(2)~H(G)\geq \left(\frac{1}{2} +\frac{2}{3(n-2)}\right)D(G).$$
\end{lemma}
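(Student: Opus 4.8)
The proof is a finite verification: there are only finitely many quasi-tree graphs of each order $n\in\{3,4,5,6\}$, and Figures~\ref{fign4} and~\ref{fign5} already list those of orders $4$ and $5$ with their harmonic indices, so the real content is the case $n=6$. I would first record a uniform simplification. An order-$n$ quasi-tree graph $G$ satisfies $D(G)\le n-2$: if $G-v=T$ is a tree on $n-1$ vertices then $G$ has at least $n$ edges (being not a tree), so $\deg_G v\ge 2$; since at most one vertex of $T$ can be at $T$-distance $n-2$ from any given vertex, $v$ has a neighbour $u$ with $d_T(u,y)\le n-3$, giving $d_G(v,y)\le n-2$, and $d_G(x,y)\le D(T)\le n-2$ for $x,y\in V(T)$. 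A short computation then shows
\[
\left(\frac12+\frac{2}{3(n-2)}\right)D\;\ge\;D+\frac53-\frac n2\qquad\text{whenever }D\le n-2,
\]
with equality exactly when $D=n-2$. Hence inequality $(2)$ implies inequality $(1)$ for every quasi-tree graph of order $n\le 6$, and it suffices to prove $(2)$, that is, $H(G)\ge\left(\frac12+\frac{2}{3(n-2)}\right)D(G)$.

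For $n=3$ the only quasi-tree graph is $K_3$, for which $H=\frac32\ge\frac76$. For $n=4$ every quasi-tree graph has $D=2$, so the target is $H(G)\ge\frac53$, and for $n=5$ it is $H(G)\ge\frac{13}{18}D(G)$; I would read the harmonic index and the (small) diameter of each graph off Figures~\ref{fign4} and~\ref{fign5} and check these inequalities case by case. Every graph passes except $U_{5,3}^{1,1}$ (two pendant vertices attached to two vertices of a triangle), for which $D=3$ but $H=\frac{32}{15}<\frac{13}{6}$, which is precisely the graph excluded from the statement.

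For $n=6$ we have $D(G)\le 4$, so the goal is $H(G)\ge\frac23 D(G)$, and I would split on $D(G)\in\{2,3,4\}$. Writing $\Delta(G)$ for the maximum degree, every edge $uv$ has weight $\frac{2}{d_u+d_v}\ge\frac1{\Delta(G)}$ and $|E(G)|\ge 6$, so: if $\Delta(G)\le 3$ then $H(G)\ge\frac{|E(G)|}{3}\ge 2$, which settles $D(G)\le 3$; if $\Delta(G)=4$ and $D(G)=2$ then $H(G)\ge\frac{|E(G)|}{4}\ge\frac32>\frac43$; and if $\Delta(G)=5$ then $G$ has a universal vertex, so $D(G)=2$, and bounding the five edges incident to it below (using Lemma~\ref{casen6}) together with the remaining edges gives $H(G)>\frac43$. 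There remain two short explicit lists. The quasi-tree graphs of order $6$ with $\Delta(G)=4$ and $D(G)=3$ consist of a vertex joined to four of the other five together with one extra edge; there are only a few, all with $H>2$. The quasi-tree graphs of order $6$ with $D(G)=4$ are very rigid: a diametral path occupies five of the six vertices and admits no chord, so the sixth vertex is joined to at most three consecutive vertices of that path, and up to the reflection of the path this yields six graphs; computing their exact harmonic indices shows each satisfies $H\ge\frac83$ except $U_{6,4}^{1,1}$, with $H=\frac{13}{5}<\frac83$, which is the second excluded graph, while $V_{1,1}$ gives $H=\frac83=\frac23\cdot 4$.

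The step I expect to be most laborious is the last one: checking that the order-$6$ enumeration is exhaustive and that every degree sequence, hence every edge weight, is computed correctly, since $V_{1,1}$ lies exactly on the bound and $U_{6,4}^{1,1}$ only barely below it. The reduction to $(2)$ and the estimate $H(G)\ge|E(G)|/\Delta(G)$ are exactly what pare the $n=6$ case down to the handful of diameter-$3$ graphs with a degree-$4$ vertex and the six diameter-$4$ graphs that genuinely require exact values, with Lemma~\ref{casen6} absorbing the one remaining delicate estimate.
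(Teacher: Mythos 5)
Your overall strategy is essentially the paper's: a finite verification for $3\le n\le 6$, with a crude edge-weight estimate absorbing the least structured order-$6$ graphs. Two genuine differences are worth noting. First, your reduction of inequality $(1)$ to inequality $(2)$ via $D(G)\le n-2$ is correct (the difference $\left(\tfrac12+\tfrac{2}{3(n-2)}\right)D-\left(D+\tfrac53-\tfrac n2\right)$ is linear in $D$ and vanishes at $D=n-2$, with the right sign for $D\le n-2$), and it is an observation the paper does not make: the paper simply checks both inequalities. Second, for $n=6$ the paper organizes the check by the isomorphism type of the tree $G-w$ (namely $K_{1,4}$, $P_5$, $K_{1,3}^{+}$), which is automatically exhaustive, using Lemma~\ref{casen6} for the star case and explicit figures for the other two; you instead split by $(\Delta(G),D(G))$ and use $H(G)\ge |E(G)|/\Delta(G)$. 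Your $n\le 5$ cases, your six-graph enumeration for $D(G)=4$ (with $U_{6,4}^{1,1}$ failing and $V_{1,1}$ attaining equality), and your $\Delta\le 3$, $\Delta=4$ with $D=2$, and $\Delta=5$ estimates all check out.

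The gap is in the subcase $n=6$, $\Delta(G)=4$, $D(G)=3$: these graphs do not ``consist of a vertex joined to four of the other five together with one extra edge.'' For instance, take $K_{1,4}$ with centre $c$ and leaves $a,b,d,e$ and add a new vertex $w$ adjacent to $a,b,d$ (seven edges), or take the path $a\!-\!b\!-\!c\!-\!d\!-\!e$ and add $w$ adjacent to $a,b,c,d$ (eight edges); both are quasi-trees (delete $w$), both have $\Delta=4$ and $D=3$, and neither has the form you describe. The eight-edge graph is at least caught by your own bound $H\ge |E|/4\ge 2$, but the seven-edge graph is not ($7/4<2$), so as written it belongs to the explicit list and is missing from it; it does satisfy the inequality ($H=\tfrac{13}{5}>2$), so the statement is unharmed, but the case analysis is not exhaustive. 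The fix is easy — first invoke $H\ge |E|/4$ to dispose of all graphs with $|E|\ge 8$ in this subcase, then list the finitely many quasi-trees with $\Delta=4$, $D=3$ and $|E|\in\{6,7\}$ — yet in a proof whose entire content is an exhaustive finite check, an enumeration asserted without proof and in fact incomplete is a real hole; this is precisely what the paper's split by the shape of $G-w$ avoids.
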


\begin{proof}
If $n=3$, then $G$ should be the complete graph, $K_3$. In this case, $D(G)=1$ and by an easy
calculation $H(G)=\frac{3}{2}$ and both inequalities  hold.

If $n=4$, then since $K_4$ is not a quasi-tree graph, $D(G)>1$. Also since $G$ is
not a tree, $G\neq P_4$ and $D(G)=2$. Hence, $D(G)+\frac{5}{3}-\frac{n}{2}= \left(\frac{1}{2} +\frac{2}{3(n-2)}\right)D(G)=\frac{5}{3}$.
So both inequalities hold for $n=4$ (see Figure \ref{fign4}).

Suppose $n=5$ and $w$ is the vertex that $G-w$ is a tree. Since $G-w$ is a tree of order $4$,
$D(G-w)=3$ or $2$. Also since $G$ is a quasi-tree graph, it is not a complete graph, and hence, $D(G)=3$ or $2$.

If $D(G)=3$, then $ D(G)+\frac{5}{3}-\frac{n}{2}=\left(\frac{1}{2} +\frac{2}{3(n-2)}\right)D(G)=\frac{13}{6}$.
If $D(G)=2$, then $ D(G)+\frac{5}{3}-\frac{n}{2}=\frac{7}{6}$ and $ \left(\frac{1}{2} +\frac{2}{3(n-2)}\right)D(G)=\frac{13}{9}$.
All quasi-tree graphs of order $5$ and their harmonic indices are shown in Figure \ref{fign5}.
As it is seen, all  graphs hold both inequalities except when $G=U_{5,3}^{1,1}$.

Suppose $n=6$ and $w$ is the vertex that $G-w$ is a tree. So $G-w$ is one of $P_5,~K_{1,4}$ or $K_{1,3}^{+}$,
where $K_{1,3}^{+}$ is obtained  by attaching a new pendant vertex to a pendant vertex of $K_{1,3}$.

If $G-w=K_{1,4}$, then $D(G-w)=2$. Since $D(G)\leq D(G-w)$ and $G$ is not $K_6$, so $D(G)=2$, $D(G)+\frac{5}{3}-\frac{n}{2}= \frac{2}{3}$ and $\left(\frac{1}{2}+\frac{2}{3(n-2)}\right)D(G)= \frac{4}{3}$. On the other hand, for every edge $uv$ of $G-w$,
$\frac{2}{d_u+d_v}$ is at least $ \frac{2}{7}$, which $d_u$ and $d_v$
are the degree of $u$ and $v$ in $G$ respectively. So
\begin{align*}
H(G)\geq 4(\frac{2}{7})+\sum_{x\in N(w)} \frac{2}{d_x+d_w}\geq \frac{8}{7}+\frac{2}{5+d_w}+\frac{2(d_w-1)}{2+d_w}\geq
\frac{8}{7}+\frac{11}{14}>\frac{4}{3}.
\end{align*}
The second and third inequalities hold by Lemma \ref{casen6}.

If $G-w=P_5$, then the graph $G$ is one of the graphs shown in Figure \ref{fign61} which their harmonic indices are calculated.
Also $D(G)\leq 4$ and so $ D(G)+\frac{5}{3}-\frac{n}{2}\leq 2+\frac{2}{3}$ and
$\left(\frac{1}{2} +\frac{2}{3(n-2)}\right)D(G)\leq 2+\frac{2}{3}$. So for every
graph both inequalities hold, except when $G=U_{6,4}^{1,1}$. Also the equality holds when $G=V_{1,1}$.

If $G-w=K_{1,3}^{+}$, then the graph $G$ is one of the graphs shown in Figure \ref{fign62} which their harmonic indices are calculated.
Also $D(G)\leq 3$ and so $ D(G)+\frac{5}{3}-\frac{n}{2}\leq 1+\frac{2}{3}$ and
$\left(\frac{1}{2} +\frac{2}{3(n-2)}\right)D(G)\leq 2$. Obviousely, for every graph both inequalities hold.
\end{proof}
\begin{figure}[h]
\includegraphics[scale=.8]{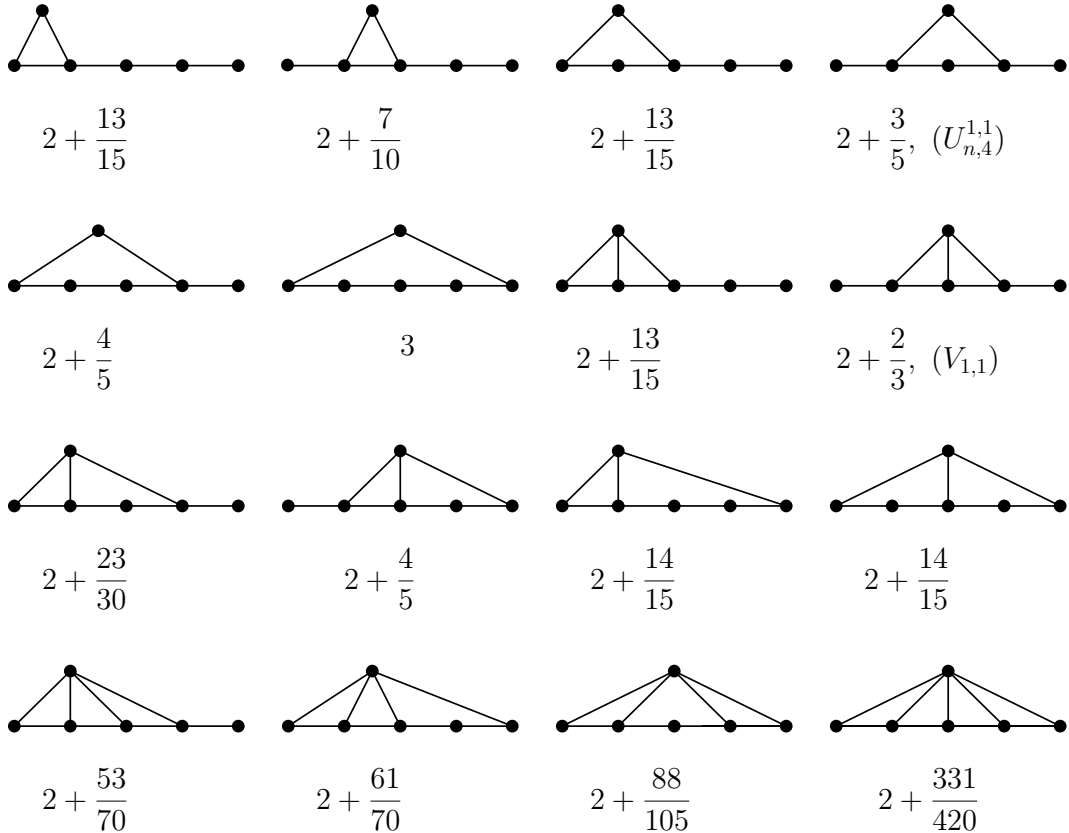}
\caption{Quasi-tree graphs of order $6$ obtained from $P_5$, and their harmonic indices.}\label{fign61}
\end{figure}
\begin{figure}[h]
\includegraphics[scale=.8]{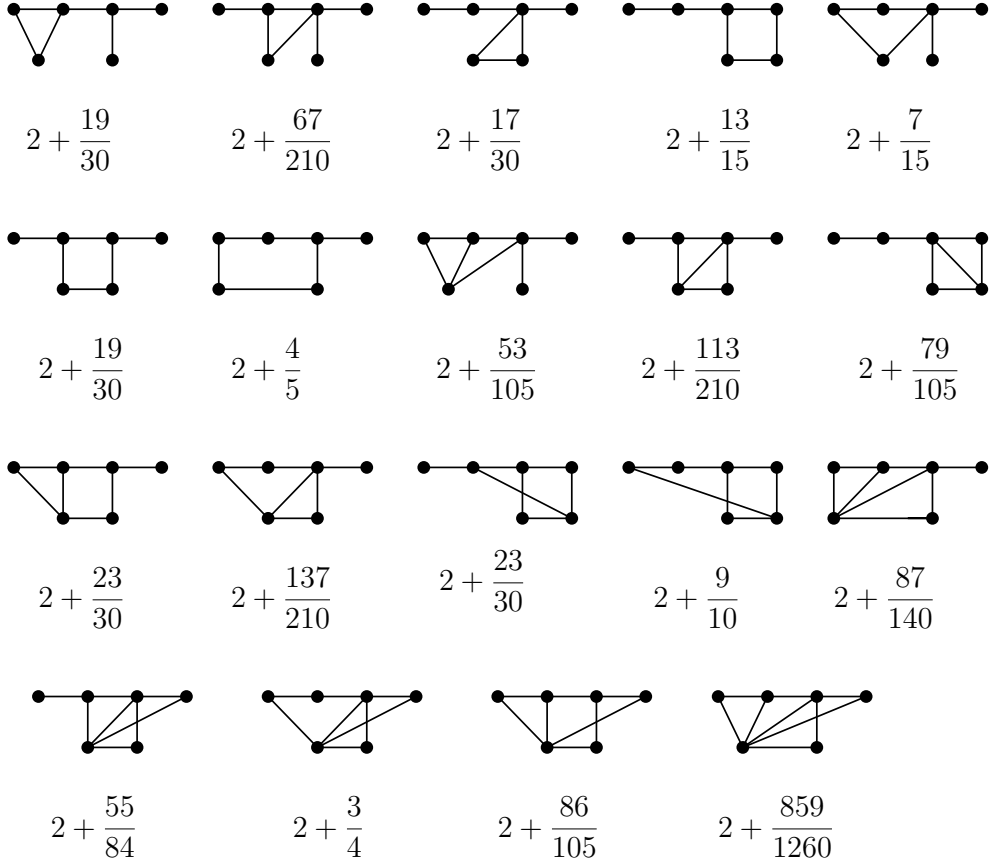}
\caption{Quasi-tree graphs of order $6$ obtained from $K_{1,3}^{*}$, and their harmonic indices.}\label{fign62}
\end{figure}
\begin{theorem}\label{th1}
Let $G\neq U_{6,4}^{1,1},U_{5,3}^{1,1}$ be a quasi-tree graph with $n\geq 3$ vertices. Then,
$ H(G)\geq D(G)+\frac{5}{3}-\frac{n}{2}$. The equality holds if and only if $G=V_{1,1}$ or $G=U_{n,4}^{1,n-5}$.
\end{theorem}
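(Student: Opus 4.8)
The plan is to prove Theorem \ref{th1} by induction on $n$. The base cases $3 \le n \le 6$ are already established in Lemma \ref{th0}, so I may assume $n \ge 7$ and that the inequality $H(G') \ge D(G') + \frac{5}{3} - \frac{|V(G')|}{2}$ holds for every quasi-tree graph $G'$ with fewer than $n$ vertices (excluding the two sporadic exceptions, which only occur at orders $5$ and $6$ anyway). The guiding idea is to locate a diametral path in $G$, peel off a pendant path of length roughly $\ell$ from one end of it, and compare the harmonic index and diameter of the smaller graph with those of $G$.

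Concretely, fix a diametral path $u_0 u_1 \cdots u_D$ realizing $D = D(G)$, and let $w$ be the vertex with $G - w$ a tree. The key structural observation is that since $G$ is a quasi-tree and $n \ge 7$, a diametral path is long, so one of its ends is ``far'' from the cyclic part; near that end the graph looks locally like a pendant path attached at a vertex of degree $2$ or more. I would delete a segment $P$ of $k$ consecutive vertices from that end (with $k$ a small constant, $k=1$ or $k=2$, chosen so that $G - P$ is still a quasi-tree graph and still not a tree). Deleting $P$ decreases $n$ by $k$, decreases $D(G)$ by at least $k$ (usually exactly $k$), and changes $H$ in a controlled way: the edges entirely inside $P$ contribute a fixed amount, the one edge joining $P$ to the rest may have its weight altered because the degree of the attachment vertex drops by one, and all other edge weights are unchanged. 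Summing the target inequality $H(G-P) \ge D(G-P) + \frac{5}{3} - \frac{n-k}{2}$ from the induction hypothesis, it remains to verify the local inequality
\[
H(G) - H(G-P) \ge \bigl(D(G) - D(G-P)\bigr) - \frac{k}{2},
\]
and since the left side is essentially $\frac{k}{2}$ (each new pendant-path edge on a degree-$2$ caterpillar contributes $\frac12$, plus a nonnegative correction at the attachment vertex) while the right side is at most $0$, this closes the induction. The analysis of the attachment-vertex correction is exactly the kind of single-variable monotonicity computation packaged in Lemma \ref{casen6}, and the interaction between two paths meeting the cycle is handled by the two-variable positivity of Lemma \ref{case2.2}.

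The main obstacle — and where most of the real work lies — is the equality analysis and the handful of boundary configurations where the diametral path is \emph{not} long enough to peel safely, i.e. where removing a pendant segment either disconnects $G$, turns it into a tree, or fails to drop the diameter. These correspond to graphs where the cycle sits essentially at the end of the diametral path: the cycle is $C_3$ or $C_4$ (possibly with a chord, giving $K_4^-$) with one or two short pendant paths, which is precisely how the extremal families $U_{n,4}^{1,n-5}$ and $V_{1,1}$ arise. I would treat these by a direct case split on the length of the cycle in the unicyclic ``skeleton'' $G - w$ has after adding back the edges at $w$, reducing to unicyclic graphs (where the sharper bound of Jerline–Michaelraj already applies with the known equality case $U_{n,4}^{1,n-5}$) whenever $w$ has degree $\le 2$ or lies in a tree-like position, and otherwise bounding $H(G)$ from below edge-by-edge using that a vertex of degree $\ge 3$ forces several incident edge weights to be small. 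Tracking when every one of these inequalities is tight pins down $G = V_{1,1}$ (order $6$, the one genuinely new extremal graph) and $G = U_{n,4}^{1,n-5}$ as the only equality cases, matching the statement.
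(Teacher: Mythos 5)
Your reduction is genuinely different from the paper's, but as sketched it has real gaps. The paper never peels the diametral path: it deletes a vertex $t$ of degree $1$ or $2$ lying \emph{off} the path $P$, so that $D(G-t)=D(G)$ while $n$ drops by one; the induction bound then already exceeds the target by $\frac12$, the local computation only has to show that $H$ essentially does not decrease (this is what Lemmas \ref{case2.2} and \ref{casen6} are for), and the residual case where no such $t$ exists forces $G$ to be a path plus the vertex $w$ with $d_w\leq 3$, finished by the unicyclic theorem of Jerline--Michaelraj and Table \ref{tabfig1}. In your scheme there is no slack at all: deleting $k$ end vertices of a pendant diametral segment lowers the diameter by \emph{at most} $k$, and generically by exactly $k$, so the right-hand side of your local inequality is $k/2$, not ``at most $0$'' as you assert; you need the full gain $H(G)-H(G-P)\geq k/2$, and this fails whenever the attachment vertex has degree $\geq 3$ or lies on the cycle. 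Concretely, if $u_0$ is a pendant vertex attached to a triangle that carries one long path, deleting $u_0$ gains only $\frac12-\frac1{15}-\frac1{10}=\frac13$ while the diameter drops by $1$; a long cycle with a single pendant vertex gives a gain of $\frac3{10}$. Moreover the troublesome configurations are not just ``$C_3$ or $C_4$ with one or two short pendant paths'': there are infinite families of quasi-trees with no pendant vertex at all (long cycles, $K_4^{-}$ with its two paths to the degree-two vertices subdivided, theta-type graphs), where both ends of every diametral path sit inside the cyclic part, nothing can be peeled without producing a tree, and the graph need not be unicyclic, so neither your peeling step nor your unicyclic escape hatch applies; your edge-by-edge fallback for these is not worked out. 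Note also that ``fails to drop the diameter'' is the harmless direction --- the dangerous one, diameter dropping while the gain is below $k/2$, is missing from your list of boundary situations.

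Second, your induction hypothesis excludes $U_{6,4}^{1,1}$ and $U_{5,3}^{1,1}$, and this cannot be waved away as a base-case artifact: for $n=7$ the peeled graph can be exactly one of them (peel one vertex from the long tail of $U_{7,4}^{1,2}$ and you land on $U_{6,4}^{1,1}$, where the inequality is false), so every such pre-image must be checked by hand, as the paper does for its own reduction in Figures \ref{figt1}, \ref{figt21} and \ref{figt22}. Finally, because your generic peel is tight (gain exactly $\frac12$ when the relevant degrees are $2$ and $2$), the equality characterization must be propagated through every peel and through all boundary families, including the non-unicyclic extremal graph $V_{1,1}$ and the graphs $V_{r,s}$ with $d_w=3$; this is where most of the work lies and it is only gestured at. The outline could probably be completed along the lines of Liu's tree argument, but as written the key local inequality is false in relevant cases, the exceptional graphs are not handled, and the extremal analysis is absent.
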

\begin{proof}
By induction on $n$, if $n\leq 6$, then Theorem \ref{th0} implies that the inequality is true, unless when $G=U_{6,4}^{1,1},U_{5,3}^{1,1}$.
Also by Theorem \ref{th0} the equality holds when $G=V_{1,1}$.

Let $G$ be a quasi-tree graph with $n\geq 7$ vertices. Suppose $w$ is a vertex of $G$ such
that $G-w$ is a tree. Let $P=u_0-u_1-\cdots-u_d$ be the diametrical path of $G$.
There are three cases as follows.
\item[\textbf{Case 1.}]
There exists $t\in G-P$ such that $d_t=1$.
Since $t$ is not in diametrical path of $G$ and $d_t=1$, $D(G-t)=D(G)$ and  $G-t$
is a quasi-tree graph too. Suppose $N(t)=\{r\}$ and $G-t\neq U_{6,4}^{1,1}$. Then, by induction hypothesis,
\begin{align*}
H(G)&=H(G-t)+\frac{2}{d_r+1}-\sum_{\substack{x\in N(r)\\x\neq t}}\frac{2}{d_r+d_x-1}+\sum_{\substack{x\in N(r)\\x\neq t}}\frac{2}{d_r+d_x}\\
&=H(G-t)+\frac{2}{d_r+1}-\sum_{\substack{x\in N(r)\\x\neq t}}\frac{2}{(d_r+d_x-1)(d_r+d_x)}\\
&\geq H(G-t)+\frac{2}{d_r+1}-\frac{2(d_r-1)}{d_r(d_r+1)}\geq (D(G)+\frac{5}{3}-\frac{n-1}{2})+\frac{2}{d_r(d_r+1)}\\&>D(G)+\frac{5}{3}-\frac{n}{2}.\\
\end{align*}
If $G-t=U_{6,4}^{1,1}$, then $G$ is one of the graphs  shown with their harmonic indices in Figure \ref{figt1}.
In this case, $D(G)=4$ and $ D(G)+\frac{5}{3}-\frac{n}{2}=\frac{65}{30}$. Hence, the inequality holds.
\item[\textbf{Case 2.}]
Every vertex of $G-P$ is of degree at least $2$ and there exists $t\in G-P-w$ such that $d_t=2$.
Similar to the previous case, $D(G)=D(G-t)$. Suppose $N(t)=\{r,s\}$. Note that $d_r,d_s\geq 0$, otherwise $\{r,s\}\cap P=\emptyset$ and hence, $t\in P$, a contradiction. Since $d_t=2$, it is
possible that $G-t$ be a tree. There exist three subcases as follows.
\item[\small\textbf{Subcase 2.1.}]
$G-t$ is not a tree and $r,s$ are not adjacent in $G$.
By the hypothesis, there exist at most two vertices of degree $1$ in $G$ which are $u_0$ and $u_d$. If $u_0,u_d\in N(r)$, then $D(G)=2$ and since $r,s$ are not adjacent, $d(s,u_0)\geq 3$, a contradiction. So $r$ has at most one neighbore of degree $1$. Same argument is valid for $s$ and so
\begin{align*}
H(G)&=H(G-t)+\frac{2}{2+d_r}+\frac{2}{2+d_s}\\&-2\sum_{\substack{x\in N(r)\\x\neq t}}\frac{1}{(d_r-1+d_x)(d_r+d_x)}-2\sum_{\substack{y\in N(s)\\y\neq t}}\frac{1}{(d_s-1+d_y)(d_y+d_s)}\\&
\geq D(G)+\frac{5}{3}+\frac{n-1}{2}+\frac{2}{2+d_r}+\frac{2}{2+d_s}\\&-\frac{2(d_r-2)}{(d_r+1)(d_r+2)}-\frac{2}{d_r(d_r+1)}-\frac{2(d_s-2)}{(d_s+1)(d_s+2)}-\frac{2}{d_s(d_s+1)}\\&
=D(G)+\frac{5}{3}-\frac{n}{2}+\frac{1}{2}+\frac{4(d_r-1)}{d_r(d_r+1)(2+d_r)}+\frac{4(d_s-1)}{d_s(d_s+1)(2+d_s)}>D(G)+\frac{5}{3}-\frac{n}{2}.
\end{align*}
%
If $G-t=U_{6,4}^{1,1}$, then $G$ is one of the graphs which are shown with their harmonic indices in Figure \ref{figt21}.
In this case, $D(G)=4$ and $ D(G)+\frac{5}{3}-\frac{n}{2}=\frac{65}{30}$.
\item[\small\textbf{Subcase 2.2.}]
$G-t$ is not a tree and $r,s$ are adjacent in $G$.
If $G-t\neq U_{6,4}^{1,1}$, then
\begin{align*}
H(G)&=H(G-t)+\frac{2}{d_r+2}+\frac{2}{d_s+2}+\frac{2}{d_r+d_s}-\frac{2}{d_r+d_s-2}\\&
-\sum_{\substack{x\in N(r)\\x\neq t,y}} \frac{2}{(d_x+d_r-1)(d_x+d_r)}-\sum_{\substack{y\in N(s)\\y\neq t,r}} \frac{2}{(d_y+d_s-1)(d_y+d_s)}\\
&\geq  H(G-t)+\frac{2}{d_r+2}+\frac{2}{d_s+2}-\frac{4}{(d_r+d_s)(d_r+d_s-2)}- \frac{2(d_r-2)}{d_r(d_r+1)}- \frac{2(d_s-2)}{d_s(d_s+1)}\\
&\geq H(G-t)-\frac{4}{(d_r+d_s)(d_r+d_s-2)}+\frac{2(d_r+4)}{d_r(d_r+1)(d_r+2)} +\frac{2(d_s+4)}{d_s(d_s+1)(d_s+2)}\\&> D(G)+\frac{5}{3}-\frac{n}{2}.
\end{align*}
Since $d_r, d_s\geq 2$, the last inequality is obtained from Lemma \ref{case2.2}.

If $G-t=U_{6,4}^{1,1}$, then $G$ is the graph which is shown with its harmonic index in Figure \ref{figt22}.
As Subcase 2.1, $ D(G)+\frac{5}{3}-\frac{n}{2}=\frac{65}{30}$ and the inequality holds.
\item[\small\textbf{Subcase 2.3.}]
$G-t$ is a tree.
Since $G-w$ is also a tree, by counting the number of edges and vertices, it is obtained  that $d_w=d_t=2$.
This means that $G$ is a unicyclic graph and as proved by \cite[Theorem~ 3.1.]{jerline2016conjecture}, $ H(G)\geq D(G)+\frac{5}{3}-\frac{n}{2}$,
with equality holds if $G=U_{n,4}^{1,n-5} $.
\item[\textbf{Case 3.}]
Every vertex of $G-\{u_0,u_1,\cdots,u_d\}$ is of degree at least $2$ and if
$t\in V(G)$ and $d_t=2$, then $t\in\{u_0,u_1,\cdots,u_d,w\}$.
Since $G-w$ is a tree, every pendant vertex of $G-w$ is in
$\{u_0,u_1,\cdots,u_d\}$. So $G-w$ is a path and $V(G)=\{u_0,u_1,\cdots,u_d\}\cup\{w\}$.
Also since $G$ is not a tree $d_w\geq 2$. If $d_w=2$, then $G$ is a unicyclic graph
and as proposed by \cite[Theorem~3.1.]{jerline2016conjecture}, $ H(G)\geq D(G)+\frac{5}{3}-\frac{n}{2}$, with equality holds if $G=U_{n,4}^{1,n-5}$.
So there exist two subcases as follows.
\item[\small\textbf{Subcase 3.1.}]
$d_w=3$
In this case, $G$ is one of the graphs in Table \ref{tabfig1}. In all cases, $ H(G)\geq D(G)+\frac{5}{3}-\frac{n}{2}$. As it is shown, the equality holds when $G=V_{1,1}$.
\item[\small\textbf{Subcase 3.2.}]
$d_w>3$
Suppose $\{u_h,u_i,u_j,u_k\}\subseteq N(w)$ such that $h<i<j<k$. If $k-h>2$, then the diametrical
path $u_1-\cdots-u_h-\cdots-u_k-\cdots-u_d$ is longer than the path $u_1-\cdots-u_h-w-u_k-\cdots,u_d$,
a contradiction. So $k-h\leq 2$, which is another contradiction. Hence, this case does not happen.
The inequality holds in all cases and equality holds if and only if $G=V_{1,1}$ or $U_{n,4}^{1,n-5}$.
\end{proof}
\begin{figure}[h]
\includegraphics[scale=.7]{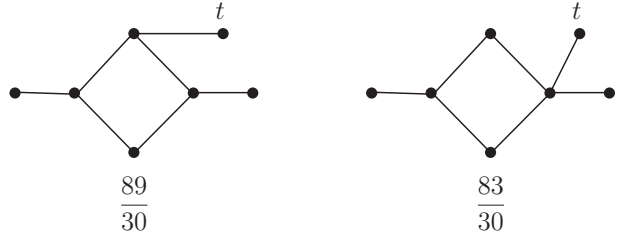}
\caption{The graphs related to Case 1 of Theorem \ref{th1} and Theorem \ref{th2}.}\label{figt1}
\end{figure}
\begin{figure}[h]
\includegraphics[scale=.7]{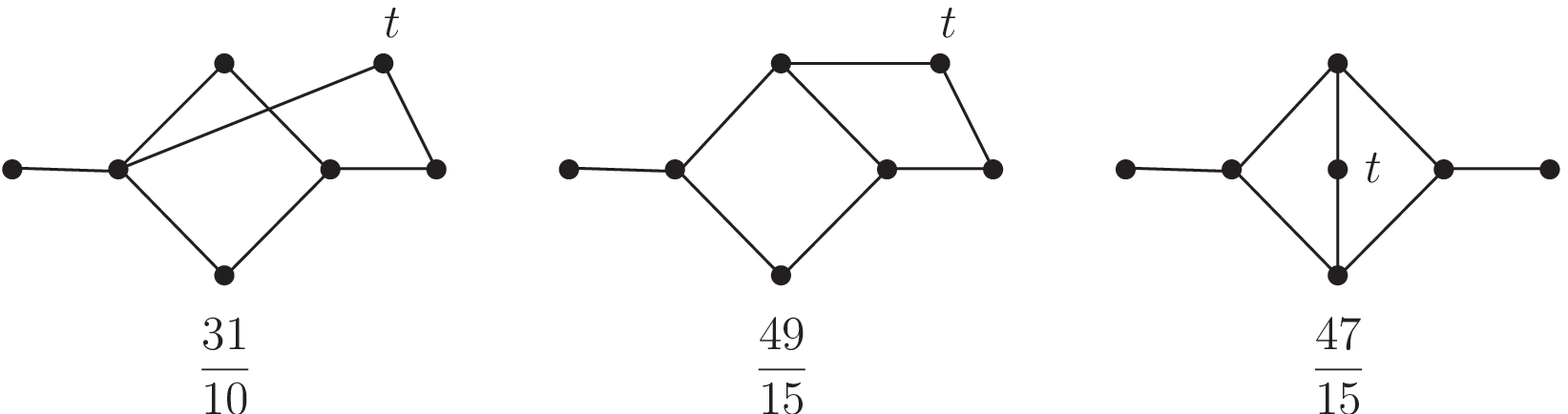}
\caption{The graphs related to Subcase 2.1 of Theorem \ref{th1} and Theorem \ref{th2}.}\label{figt21}
\end{figure}
\begin{figure}[h]
\includegraphics[scale=.7]{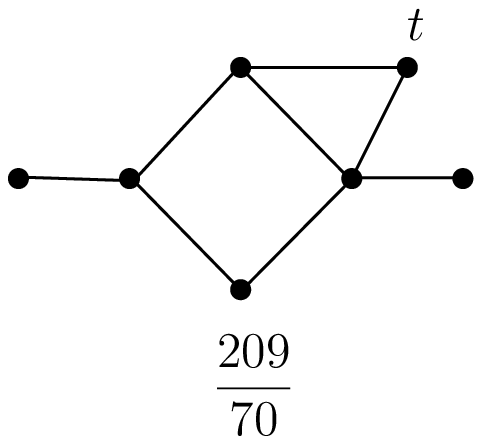}
\caption{The graphs related to Subcase 2.2 of Theorem \ref{th1} and Theorem \ref{th2}.}\label{figt22}
\end{figure}
\begin{table}[ht]
\begin{center}
\caption{All possibilities for Case 3 of Theorem \ref{th1}, \ref{th2}.}\label{tabfig1}
\begin{tabular}{|c|c|c|c|c|}
\hline

\multicolumn{2}{|c|}{Graph}&$H(G)$ & $D(G)+\frac{5}{3}-\frac{n}{2}$&$\left(\frac{1}{2}+\frac{2}{3(n-2)}\right) D(G)$\\
\hline\hline
\multirow{4}{*}{$V_{0,0}$}&\multirow{4}{*}{\includegraphics[height=1.3 cm]{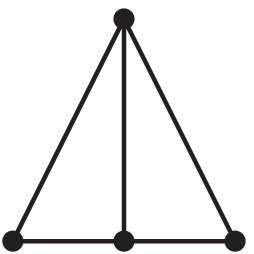}}&\multirow{4}
{*}{$\frac{29}{15}$}&\multirow{4}{*}{$\frac{25}{15}$}
&\multirow{4}{*}{$\frac{25}{15}$}\\
&&&&\\&&&&\\&&&&\\
\hline
\multirow{4}{*}{$V_{0,1}$}&\multirow{4}{*}{\includegraphics[height=1.3 cm]{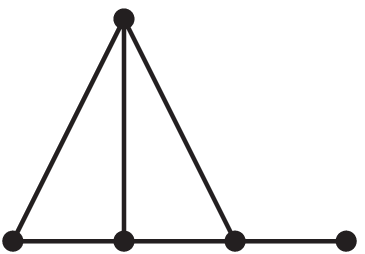}}&\multirow{4}{*}{
$\frac{23}{10}$}&\multirow{4}{*}{$\frac{13}{6}$}&
\multirow{4}{*}{$\frac{13}{6}$}\\ &&&&\\&&&&\\&&&&\\
\hline
\multirow{4}{*}{$V_{1,1}$}&\multirow{4}{*}{\includegraphics[height=1.3 cm]{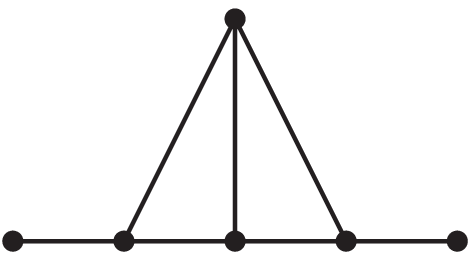}}&\multirow{4}{*}
{$\frac{8}{3}$}&\multirow{4}{*}{$\frac{8}{3}$}
&\multirow{4}{*}{$\frac{8}{3}$}\\&&&&\\&&&&\\&&&&\\
\hline
\multirow{4}{*}{$V_{0,r}$}&\multirow{4}{*}{\includegraphics[height=1.5 cm]{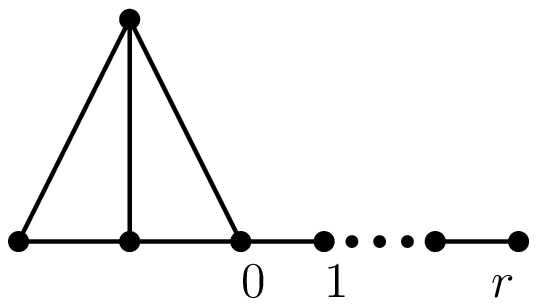}}&
\multirow{4}{*}{$ \frac{D(G)}{2}+\frac{13}{15}$}&\multirow{4}{*}{$\frac{D(G)}{2}+\frac{2}{3}$}&
\multirow{4}{*}{$\frac{D(G)}{2}+\frac{2}{3}$}\\
&&&&\\&&&&\\$r\geq 2$&&&&\\
\hline
\multirow{4}{*}{$V_{1,r}$}&\multirow{4}{*}{\includegraphics[height=1.5 cm]{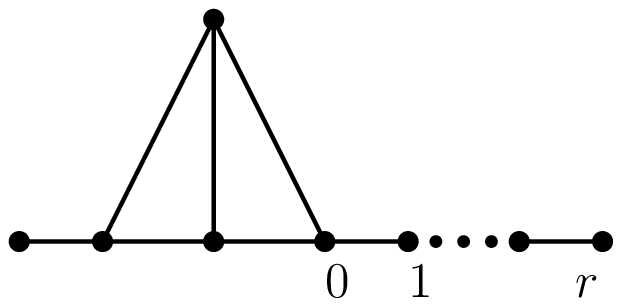}}&
\multirow{4}{*}{$ \frac{D(G)}{2}+\frac{11}{15}$}&\multirow{4}{*}{$\frac{D(G)}{2}+\frac{2}{3}$}&
\multirow{4}{*}{$\frac{D(G)}{2}+\frac{2}{3}$}\\
&&&&\\&&&&\\$r\geq 2$&&&&\\
\hline
\multirow{4}{*}{$V_{s,r}$}&\multirow{4}{*}{\includegraphics[height=1.5 cm]{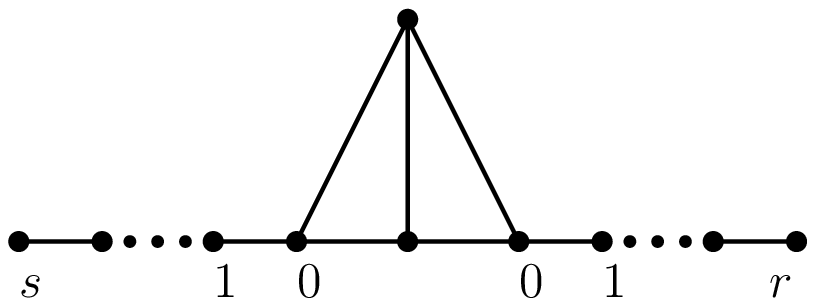}}&\multirow{4}{*}{$\frac{D(G)}{2}+\frac{4}{5}$}&\multirow{4}{*}{
$\frac{D(G)}{2}+\frac{2}{3}$}&\multirow{4}{*}{$\frac{D(G)}{2}+\frac{2}{3}$}\\&&&&\\&&&&\\$s,r\geq 2$&&&&\\
\hline
\end{tabular}
\end{center}
\end{table}

\begin{theorem}\label{th2}
Let $G\neq U_{6,4}^{1,1},~U_{5,3}^{1,1}$ be a quasi-tree graph with $n\geq 3$ vertices, then
$$H(G)\geq \left(\frac{1}{2} +\frac{2}{3(n-2)}\right)D(G).$$ The equality holds if and only if $G=V_{1,1}$ or $U_{n,4}^{1,n-5} $.
\end{theorem}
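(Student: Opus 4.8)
The plan is to run the same induction on $n$ as in the proof of Theorem~\ref{th1}, keeping its case decomposition and its edge-reweighting estimates, and to note that the passage from the additive bound to the multiplicative one costs no new analytic work. Two elementary observations drive this. First, each inductive step removes a vertex $t$ (a pendant vertex in Case~1, a vertex of degree $2$ off the diametrical path in Case~2) with $D(G-t)=D(G)$ while the order drops by one, so the multiplier of $D(G)$ strictly improves:
\[
\Bigl(\tfrac12+\tfrac{2}{3(n-3)}\Bigr)D(G)=\Bigl(\tfrac12+\tfrac{2}{3(n-2)}\Bigr)D(G)+\frac{2\,D(G)}{3(n-2)(n-3)} .
\]
Second, in the terminal situation of Case~3 with $d_w\ge 3$ one has $V(G)=\{u_0,\dots,u_d,w\}$, hence $D(G)=n-2$, and then the two bounds simply coincide:
\[
\Bigl(\tfrac12+\tfrac{2}{3(n-2)}\Bigr)D(G)=\tfrac{n-2}{2}+\tfrac23=D(G)+\tfrac53-\tfrac n2 ,
\]
so Case~3 is already settled by Theorem~\ref{th1}, equality analysis included.

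For $n\le 6$ the inequality, with the two stated exceptions, is Lemma~\ref{th0}(2); inspecting the figures accompanying that lemma (and Table~\ref{tabfig1}) shows equality only at $V_{1,1}$. For $n\ge 7$, fix a diametrical path $P=u_0-\cdots-u_d$ and a vertex $w$ with $G-w$ a tree, and use the same Cases~1, 2.1, 2.2, 2.3 and 3. In Cases~1, 2.1 and 2.2, with $t$ the removed vertex and (where relevant) $G-t\neq U_{6,4}^{1,1}$, the proof of Theorem~\ref{th1} already establishes $H(G)\ge H(G-t)+E$ with $E\ge 0$, where $E$ is $\tfrac{2}{d_r(d_r+1)}$ in Case~1, the sum of the $\tfrac{4(d_r-1)}{d_r(d_r+1)(d_r+2)}$-type terms in Subcase~2.1, and $2f(d_r,d_s)$ in Subcase~2.2 (nonnegative by Lemma~\ref{case2.2}). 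Applying the induction hypothesis to the quasi-tree $G-t$ of order $n-1$ and using $D(G-t)=D(G)$ together with the first displayed identity,
\[
H(G)\ \ge\ \Bigl(\tfrac12+\tfrac{2}{3(n-2)}\Bigr)D(G)+\frac{2\,D(G)}{3(n-2)(n-3)}+E\ >\ \Bigl(\tfrac12+\tfrac{2}{3(n-2)}\Bigr)D(G),
\]
so these cases give strict inequality and no extremal graph. When $G-t=U_{6,4}^{1,1}$ (which forces $n=7$ and $D(G)=4$, so the right-hand side is $\tfrac{38}{15}$), $G$ belongs to the finite families of Figures~\ref{figt1}, \ref{figt21} and~\ref{figt22}, and one verifies $H(G)\ge\tfrac{38}{15}$ directly from the harmonic indices listed there.

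The remaining branches are unicyclic, namely Subcase~2.3 and Case~3 with $d_w=2$: for a unicyclic graph of order $n\ge 7$, the bound $H(G)\ge\bigl(\tfrac12+\tfrac{2}{3(n-2)}\bigr)D(G)$, with equality only at $U_{n,4}^{1,n-5}$, is the multiplicative statement of \cite[Theorem~3.1]{jerline2016conjecture}. Finally, in Case~3 with $d_w\ge 3$ the subcase $d_w>3$ cannot occur by the diametrical-path argument of Theorem~\ref{th1}, while for $d_w=3$ the graph $G$ is one of $V_{0,0},V_{0,1},V_{1,1},V_{0,r},V_{1,r},V_{s,r}$ with $D(G)=n-2$; since the final column of Table~\ref{tabfig1} is exactly $\bigl(\tfrac12+\tfrac{2}{3(n-2)}\bigr)D(G)$, the values tabulated there give the inequality, with equality precisely at $V_{1,1}$. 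Collecting the cases yields $H(G)\ge\bigl(\tfrac12+\tfrac{2}{3(n-2)}\bigr)D(G)$ with equality if and only if $G=V_{1,1}$ or $G=U_{n,4}^{1,n-5}$.

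The main work is thus bookkeeping rather than estimation: all the one- and two-variable inequalities (Lemmas~\ref{case2.2} and~\ref{casen6}) and the structural case split are inherited verbatim from Theorem~\ref{th1}. The only points requiring care are that the induction hypothesis is never invoked on $U_{6,4}^{1,1}$ or $U_{5,3}^{1,1}$ — which cannot appear as $G-t$ once $n\ge 8$, and for $n=7$ only $G-t=U_{6,4}^{1,1}$ occurs and is handled directly — and that each equality branch is matched against the right extremal graph.
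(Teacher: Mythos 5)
Your proposal is correct and follows essentially the same route as the paper: the same induction on $n$ with the identical case split (pendant vertex off $P$, degree-$2$ vertex off $P$ and $w$, the unicyclic subcases via the Jerline--Michaelraj bound, and the $d_w=3$ family), the same reuse of the edge-reweighting estimates and Lemma~\ref{case2.2}, and the same direct check of the finitely many graphs with $G-t=U_{6,4}^{1,1}$; your observation that in Case~3 one has $D(G)=n-2$ so the additive and multiplicative bounds coincide is a tidy shortcut, but it amounts to what the paper's Table~\ref{tabfig1} already records in its last two columns. The only nit is the citation: the multiplicative unicyclic bound is quoted by the paper from \cite{amalorpava2016harmonic} rather than \cite{jerline2016conjecture}, which does not affect the argument.
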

\begin{proof}
The proof is similar to the proof of Theorem \ref{th1}.
By induction on $n$, if $n\leq 6$, then Theorem \ref{th0} implies that the inequality holds unless when $G=U_{6,4}^{1,1},U_{5,3}^{1,1}$.

Let $G$ be a quasi-tree graph with $n\geq 7$ vertices. Suppose $w$ is a vertex
of $G$ such that $G-w$ is a tree. Let $P=u_0-u_1-\cdots-u_D(G)$ be the diametrical path of $G$. Since $G-w$
is a tree, $\delta(G)\leq 2$.
There exist three cases as follows.
\item[\textbf{Case 1.}]
There exists $t\in G-P$ such that $d_t=1$.
Since $t$ is not in diametrical path of $G$, $D(G)=D(G-t)$. Suppose $N(t)={r}$. So such as
Case 1 in the proof of Theorem \ref{th1}, if $G-t\neq U_{6,4}^{1,1}$, then by induction hypothesis,
\begin{align*}
H(G)\geq H(G-t)+\frac{2}{d_r(d_r+1)}>\left(\frac{1}{2} +\frac{2}{3(n-3)}\right)D(G)>\left(\frac{1}{2} +\frac{2}{3(n-2)}\right)D(G).
\end{align*}
If $G-t=U_{6,4}^{1,1}$, then $G$ is one of the graphs which is shown with their harmonic indices in
Figure \ref{figt1}. In this case, $D(G)=4$ and $ \left(\frac{1}{2} +\frac{2}{3(n-2)}\right)D(G)=\frac{38}{15}$.
Hence, the inequality holds.
\item[\textbf{Case 2.}]
Every vertex of $G-P$ is of degree at least $2$ and there exists a vertex $t\in G-P-w$ such that $d_t=2$.
As the previous case, $D(G)=D(G-t)$. Suppose $N(t)=\{r,s\}$. Since $d_t=2$ it
is possible that $G-t$ be a tree. There exist three subcases as follows.
\item[\small\textbf{Subcase 2.1.}]
$G-t$ is not a tree and $r,s$ are not adjacent in $G$.
 By the same argument as in the Subcase 2.1 of Theorem \ref{th1},
\begin{align*}
{H(G)}> H(G-t)+\frac{4(d_r-1)}{d_r(d_r+1)(2+d_r)}+\frac{4(d_s-1)}{d_s(d_s+1)(2+d_s)}>\left(\frac{1}{2} +\frac{2}{3(n-2)}\right)D(G).
\end{align*}
If $G-t=U_{6,4}^{1,1}$, then  $ \left(\frac{1}{2} +\frac{2}{3(n-2)}\right)D(G)=\frac{38}{15}$ and $G$ is one of the graphs which are shown with their harmonic indices in Figure \ref{figt21}.
\item[\small\textbf{Subcase 2.2.}]
$G-t$ is not a tree and $r,s$ are adjacent in $G$.
By Subcase 2.2 of Theorem \ref{th1},
\begin{align*}
H(G)&\geq H(G-t)+\frac{2(d_r+4)}{d_r(d_r+1)(d_r+2)}+\frac{2(d_s+4)}{d_s(d_s+1)(d_s+2)}-\frac{4}{(d_r+d_s)(d_s+d_r-2)}\\&>H(G-t).
\end{align*}
So by the induction hypothesis,
\begin{align*}
{H(G)}>\left(\frac{1}{2}+\frac{2}{3(n-3)}\right)D(G)>\left(\frac{1}{2}+\frac{2}{3(n-2)}\right)D(G).
\end{align*}
If $G-t=U_{6,4}^{1,1}$, then  $ \left(\frac{1}{2} +\frac{2}{3(n-2)}\right)D(G)=\frac{38}{15}$ and $G$ is the graph which is shown with its harmonic index
in Figure \ref{figt22} and the inequality holds.
\item[\small\textbf{Subcase 2.3.}]
$G-t$ is a tree.
By the same argument as in the proof of  Subcase 2.3 of Theorem \ref{th1}, $G$ is a unicyclic graph and by
\cite[Theorem 3.1.]{amalorpava2016harmonic}, $H(G)\geq \left(\frac{1}{2} +\frac{2}{3(n-2)}\right)D(G)$, with equality  when $G=U_{n,4}^{1,n-5}$
\item[\textbf{Case 3.}]
Every vertex of $G-P$ is of degree at least $2$ and if $t\in V(G)$ and $d_t=2$, then $t\in\{u_0,u_1,\cdots,u_d,w\}$.
By the same argument as in Case 3 of Theorem \ref{th1}, $G-w$ is a path and $V(G)=\{u_0,u_1,\cdots,u_d\}\cup\{w\}$.
Also since $G$ is not a tree, $d_w\geq 2$. If $d_w=2$, then $G$ is a unicyclic graph and
by \cite[Theorem 3.1.]{amalorpava2016harmonic}, $H(G)\geq \left(\frac{1}{2} +\frac{2}{3(n-2)}\right)D(G)$,
with equality  if $G=V_{1,1}$ or $G=U_{n,4}^{1,n-5}$. Also by the Subcase 3.2 of
Theorem \ref{th1}, $d_w\ngtr 3$. So $d_w=3$ and $G$ is one of the graphs in
Table \ref{tabfig1}. Obviousely $H(G)\geq \left(\frac{1}{2} +\frac{2}{3(n-2)}\right)D(G)$, for all of them.
As in Theorem \ref{th1}, the inequality holds in all cases and the equality be satisfy if
and only if $G=V_{1,1}$ or $U_{n,4}^{1,n-5}$.
\end{proof}

\bibliographystyle{plain}
\bibliography{harmonicrefrence}
\end{document}